\documentclass[12pt]{amsart}
\usepackage{amsfonts}
\usepackage{amssymb,amsthm,amscd,amsfonts,graphicx,tabularx,array,CJK,fancyhdr}
\usepackage{amsmath}
\usepackage{stmaryrd}
\usepackage{txfonts}
\usepackage{bbm}
\usepackage{amsfonts}
\usepackage{amssymb}
\usepackage{mathrsfs}
\usepackage{geometry}
\usepackage{times}
\usepackage{cases}
\usepackage{graphicx}
\usepackage{ifxetex}
\usepackage[arc,knot,all]{xy}
\usepackage{comment}
\usepackage{amsgen, amsthm, amscd, xspace}
\usepackage{hyperref}

\newcommand{\R}{{\mathbb{R}}}
\newcommand{\C}{{\mathbb{C}}}
\newcommand{\Q}{{\mathbb{Q}}}
\newcommand{\N}{{\mathbb{N}}}

\newcommand{\mF}{{\mathbb{F}}}

\newcommand{\diag}{{\mathrm d}{\mathrm i}{\mathrm a}{\mathrm g\,}}
\newcommand{\Det}{{\mathrm d}{\mathrm e}{\mathrm t\,}}

\newcommand{\rank}{{\mathrm r}{\mathrm a}{\mathrm n}{\mathrm k\,}}

\newcommand{\GL}{{\operatorname{GL}}}
\newcommand{\SL}{{\operatorname{SL}}}

 \makeatother

\newtheorem{thm}{Theorem}[section]
\newtheorem{defn}[thm]{Definition}

\newtheorem{prop}[thm]{Proposition}

\newtheorem{lem}[thm]{Lemma}
\newtheorem{rem}[thm]{Remark}

\begin{document}

\title{Multiplicative maps on matrix algebras}
\author{Xiaomei Yang}
\address[Yang]{School of Mathematics, Sichuan Normal University, Chengdu, Sichuan 610068, P.~R.~China}
\email{xiaomeiyang@mail.nankai.edu.cn}
\author{Fuhai Zhu}
\address[Zhu]{Department of Mathematics, Nanjing University, Nanjing, P.~R.~China}
\email{zhufuhai@nju.edu.cn}

\maketitle
Abstract:
In this paper, we use elementary method to give a classification of the multiplicative
maps on matrix algebra $M_{n}(\mF)$ over a field $\mF$ of characteristic $0$.
All the multiplicative maps are classified into three classes: the trivial ones, the degenerate ones and the non-degenerate ones.

\medskip
{\bf Keywords}: Multiplicative map, ring homomorphism.

\medskip
{\bf Mathematics Subject Classification}: 15A33, 15A60

\date{} \maketitle
\section{Introduction and statement of the main result}

During the past one hundred years, a great deal of mathematical effort has been devoted to the study of preserver problems in matrix theory. In \cite{DS2002}, \cite{JS1986} and \cite{Oml1986}, the authors deal with linear preserver problems, i.e., linear maps on matrix algebras and operator algebras preserving determinant, spectrum, commutativity, respectively. Chi-Kwong Li and Nam-Kiu Tsing presented a brief description of the subject in \cite{LT1992}.

Naturally, some similar preserver problems are raised and investigated for multiplicative maps (i.e., maps $\Phi$ satisfying $\Phi(AB)=\Phi(A)\Phi(B)$) instead of linear maps.
In \cite{Hoch1994}, Hochwald described the form of those multiplicative maps of a matrix algebra which preserve the spectrum.
As a natural generalization, he also raised the question of spectrum-preserving multiplicative maps for infinite-dimensional vector spaces with added condition of surjectivity, see \cite{OS1991}. In \cite{AH2008}, An and Hou considered rank preserving multiplicative maps on matrix algebra.
In \cite{CFL2002}, the authors established some general techniques that are used to prove results for multiplicative maps on semigroups of square matrices and obtained some basic results to study multiplicative preserver problems.
In recent years, there have been extensive study and application of multiplicative preserver problems on operator algebras over infinite-dimensional spaces. In \cite{Hou1998},
Hou gave a structure theorem of abstract multiplicative maps $\Phi$ from $\mathfrak{B}(X)$ into $\mathfrak{B}(Y)$ which sends some
rank-$1$ operator to an operator of rank not greater than $1$.
Afterwards, An and Hou obtained some characterizations of rank-preserving
multiplicative maps on $\mathfrak{B}(X)$ in \cite{AH2002}. In \cite{M1999},
Moln\'{a}r described the form of those continuous multiplicative
maps on $\mathfrak{B}(H)$ ($H$ is a separable complex Hilbert space of dimension not less than 3) which preserve the rank or the corank.

It is very strange that people always try to deal with multiplicative maps with some additional conditions, say, preserving spectrum, ranks, etc., rather than to classify all multiplicative maps first. In \cite{JL1968}, the authors investigated non-degenerate multiplicative maps (see section~\ref{M(n)} for the definition) of $n\times n$ matrices over a principal ideal domain $R$. In \cite{Sem2008}, the author
considered non-degenerate multiplicative maps of $n\times n$ matrices over a division ring. The purpose of this paper is to deal with the multiplicative maps for matrix algebras over any field of characteristic zero with an elementary method, which may be applicable to positive characteristic case.
All the multiplicative maps are separated into three classes: we devote Section \ref{sec-2} to study the trivial ones, which maps $\SL(n,\mF)$ to the identity. We prove that such multiplicative map is reduced to a multiplicative map $\overline{\Phi}:\mF\rightarrow M_k(\mF)$ with $\overline{\Phi}(1)=I_k$. In particular, we show that any of multiplicative map $\Phi:M_{n}(\mF)\rightarrow M_{k}(\mF)(k<n)$ is actually a trivial multiplicative map; in Section~\ref{GL}, we classify the degenerate ones, which maps every singular matrix to the zero matrix. We present such $\Phi(A)$ is the form of $\lambda(\det(\Phi_1(A)))\Phi_2(A)$, where $\lambda:\mF^{\ast}\rightarrow \mF^{\ast}$ is a multiplicative map and $\Phi_1,\Phi_2$ are compositions of the following multiplicative maps: $(a). \Phi_R(A)=R^{-1}AR$ for some $n\times n$ invertible matrix $R$;
$(b). \Phi^*(A)=(A^{\ast})^{t}$, where $A^\ast$ is the cofactor matrix of $A$ and $A^t$ is the transpose of $A$; $(c). \Phi_\varphi(A)=(\varphi(a_{ij}))$
for any $A=(a_{ij})\in \GL(n)$, where $\varphi:\mF\rightarrow \mF$ is a ring homomorphism ; the non-degenerate multiplicative maps are considered in Section \ref{M(n)}, in which we
conclude that $\Phi(A)=R^{-1}(\varphi(a_{ij}))R$ or $\Phi(A)=R^{-1}(\varphi(a_{ij})^{\ast})^{t}R,$
for some ring homomorphism $\varphi:\mF\rightarrow \mF$
and some invertible matrix $R$. With such classification in hand, it it easy to find all multiplicative maps which preserve spectrums, ranks, etc.

\section{Trivial Multiplicative Maps}\label{sec-2}

Denote by $M_n(\mF)$ the matrix algebra over a field $\mF$ of characteristic zero.
A map $\Phi: M_{n}(\mF)\rightarrow M_{k}(\mF)$ is called a \emph{multiplicative map} if $\Phi(AB)=\Phi(A)\Phi(B)$ for
any $A,B\in M_{n}(\mF)$.
In this section, we study a special case of multiplicative maps. First, we need the following easy property of multiplicative maps.

\begin{lem}
Let $\Phi:M_{n}(\mF)\rightarrow M_{k}(\mF)$ be a multiplicative map.
Then for any matrix $A\in M_{n}(\mF)$, up to a conjugation, we have
\[\Phi(A)=\left(
                \begin{array}{ccc}
                  \widetilde{A} & 0 & 0 \\
                  0 & 0 & 0 \\
                  0 & 0 & I_{s} \\
                \end{array}
              \right),\]
where $s=\rank(\Phi(0)), \ l=\rank(\Phi(I_{n}))-\rank(\Phi(0))$
and $\widetilde{A}\in M_{l}(\mF)$ .
\end{lem}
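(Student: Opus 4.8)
The plan is to extract all the structure of $\Phi$ from just two special values, the idempotents $P:=\Phi(0)$ and $Q:=\Phi(I_n)$, and then to read off the form of $\Phi(A)$ for an arbitrary $A$ by exploiting the identities $0\cdot A=A\cdot 0=0$ and $I_n\cdot A=A\cdot I_n=A$. The conjugation in the statement will be a single invertible matrix that simultaneously normalizes $P$ and $Q$, chosen independently of $A$.

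First I would record the elementary consequences of multiplicativity. Since $\Phi(0)=\Phi(0\cdot 0)=\Phi(0)^2$ and $\Phi(I_n)=\Phi(I_n)^2$, both $P$ and $Q$ are idempotents. Applying $\Phi$ to $0\cdot A=A\cdot 0=0$ gives $P\Phi(A)=\Phi(A)P=P$ for every $A$, while applying it to $I_n\cdot A=A\cdot I_n=A$ gives $Q\Phi(A)=\Phi(A)Q=\Phi(A)$. Taking $A=I_n$ in the first pair yields $PQ=QP=P$; in particular $P$ and $Q$ commute, and moreover $Pv=0$ whenever $Qv=0$, so $\IM(P)\subseteq\IM(Q)$.

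The key step is the simultaneous normalization of $P$ and $Q$. Being idempotents, each is diagonalizable (its minimal polynomial divides the separable polynomial $x^2-x$), and since they commute they are simultaneously diagonalizable by one invertible $S$; this $S$ is the asserted conjugation. The relation $PQ=P$ forbids the joint eigenvalue $(\epsilon_P,\epsilon_Q)=(1,0)$, so $\mF^k$ splits into exactly three joint eigenspaces, of types $(0,1)$, $(0,0)$, $(1,1)$ and of dimensions $l=\rank(Q)-\rank(P)$, $\ k-l-s$, and $s=\rank(P)$ respectively. Ordering the basis accordingly, I may assume $Q=\diag(I_l,0,I_s)$ and $P=\diag(0,0,I_s)$ in $3\times3$ block form with blocks of sizes $l,\ k-l-s,\ s$.

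Finally I would determine $\Phi(A)$ block by block. Writing $\Phi(A)=(a_{ij})_{1\le i,j\le 3}$ in this block form, the equations $Q\Phi(A)=\Phi(A)Q=\Phi(A)$ annihilate the entire middle row block and middle column block, while $P\Phi(A)=\Phi(A)P=P$ force the bottom row block and bottom column block of $\Phi(A)$ to coincide with those of $P$, giving $a_{31}=a_{13}=0$ and $a_{33}=I_s$. The only surviving block is $a_{11}\in M_l(\mF)$, which I name $\widetilde{A}$, yielding exactly the claimed form. I do not expect a genuine obstacle: the only point requiring care is verifying that the $(1,0)$ joint eigenspace is empty, so that the bookkeeping $l=\rank(\Phi(I_n))-\rank(\Phi(0))$ and $s=\rank(\Phi(0))$ is correct; after that the conclusion is a mechanical block computation valid over any field.
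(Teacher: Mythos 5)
Your proof is correct and follows essentially the same route as the paper: both arguments derive the idempotent relations $\Phi(0)\Phi(A)=\Phi(A)\Phi(0)=\Phi(0)$ and $\Phi(I_n)\Phi(A)=\Phi(A)\Phi(I_n)=\Phi(A)$, simultaneously diagonalize the commuting idempotents $\Phi(0)$ and $\Phi(I_n)$ into the block forms $\diag(0,0,I_s)$ and $\diag(I_l,0,I_s)$, and finish with the same block computation. Your write-up is in fact slightly more careful than the paper's, since you explicitly justify simultaneous diagonalizability and verify that the joint eigenvalue $(1,0)$ is excluded by $PQ=P$, which the paper leaves implicit in its ``up to a conjugation'' step.
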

\begin{proof}
Since $\Phi(0),\Phi(I_{n})$ are idempotent matrices. We may assume that $\Phi(0),\Phi(I_{n})$ are diagonal matrices with the
diagonal elements being $0$ or $1$. Using
$\Phi(0)\Phi(I_{n})=\Phi(I_{n})\Phi(0)=\Phi(0)$,
up to a conjugation, we have
\[\Phi(0)=\left(
            \begin{array}{cc}
              0 & 0 \\
              0 & I_{s} \\
            \end{array}
          \right)
, \ \ \Phi(I_{n})=\left(
                    \begin{array}{ccc}
                      I_{l} & 0 & 0 \\
                      0 & 0 & 0 \\
                      0 & 0 & I_{s} \\
                    \end{array}
                  \right),
\]
where $s=\rank(\Phi(0)),l=\rank(\Phi(I_{n}))-\rank(\Phi(0))$.

For any $A\in M_{n}(\mF)$, from
\[\Phi(0)\Phi(A)=\Phi(A)\Phi(0)=\Phi(0),\]
\[\Phi(I_{n})\Phi(A)=\Phi(A)\Phi(I_{n})=\Phi(A),\]
it follows that $\Phi(A)=\left(
                \begin{array}{ccc}
                  \widetilde{A} & 0 & 0 \\
                  0 & 0 & 0 \\
                  0 & 0 & I_{s} \\
                \end{array}
              \right)$,
where $\widetilde{A}$ is a $l\times l$ matrix.
\end{proof}

From the proof of the above Lemma, one can define a multiplicative map $\overline{\Phi}:M_{n}(\mF)\rightarrow M_{l}(\mF)$ by $\overline{\Phi}(A)=\widetilde{A}$. It is easy to see that $\Phi$ is determined by $\overline{\Phi}$ and $\rank(\Phi(0))$ uniquely. In the remainder of this paper, we only consider those maps such that $\Phi(0)=0$ and $\Phi(I_n)=I_k$. Then it is trivial that $\Phi(A^{-1})=\Phi(A)^{-1}$ and $\Phi^{-1}(I_k)$ is a normal subgroup of $\GL(n,\mF)$. A classical result says that if $N$ is a normal subgroup of $\GL(n,\mF)$, then either $N$ is contained in $Z(\GL(n,\mF))$, that is the center of $\GL(n,\mF)$, or $N$ contains $\SL(n,\mF)$. It is natural to introduce the following definition.

\begin{defn}
A multiplicative map $\Phi:M_{n}(\mF)\rightarrow M_{k}(\mF)$ is called trivial if $\SL(n,\mF)\subseteq \Phi^{-1}(I_{k})$.
\end{defn}

Trivial multiplicative maps have the following property.

\begin{lem}
Let $\Phi: M_{n}(\mF)\rightarrow M_{k}(\mF)$ be a trivial multiplicative map. Then $\Phi(A)=0$ if $\rank(A)<n$.
\end{lem}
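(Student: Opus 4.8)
The plan is to reduce everything to the diagonal idempotents $D_r=\sum_{i=1}^{r}E_{ii}$ (with $E_{ij}$ the matrix units) and to prove $\Phi(D_r)=0$ for every $1\le r\le n-1$ by strong induction on $r$. First I would invoke the rank normal form: any $A$ with $\rank(A)=r<n$ can be written as $A=PD_rQ$ with $P,Q\in\GL(n,\mF)$, so that $\Phi(A)=\Phi(P)\Phi(D_r)\Phi(Q)$ by multiplicativity. Hence $\Phi(A)=0$ as soon as $\Phi(D_r)=0$, and the whole problem is shifted onto the matrices $D_r$.

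Two preliminary observations drive the induction. Since $D_r^2=D_r$, its image $\pi_r:=\Phi(D_r)$ is idempotent. Moreover $\Phi$ is constant on $\SL(n,\mF)$-conjugacy classes: if $S'=gSg^{-1}$ with $g\in\SL(n,\mF)$, then $\Phi(S')=\Phi(g)\Phi(S)\Phi(g)^{-1}=I_k\,\Phi(S)\,I_k=\Phi(S)$, because $\Phi(g)=I_k$ for $g\in\SL(n,\mF)$. Since any two rank-$r$ idempotents are in fact conjugate by an element of $\SL(n,\mF)$ (they are $\GL$-conjugate, and one corrects the determinant by an element of the centralizer, a block $\GL(r)\times\GL(n-r)$ which realizes every nonzero determinant), $\Phi$ takes the common value $\pi_r$ on all of them.

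For the inductive step I would exhibit two rank-$r$ idempotents whose product has strictly smaller rank. Concretely take $S_1=\sum_{i=1}^{r}E_{ii}$ and $S_2=\sum_{i=2}^{r+1}E_{ii}$ (the shift requires $r\le n-1$, which holds); both are rank-$r$ diagonal idempotents, and $S_1S_2=\sum_{i=2}^{r}E_{ii}$ has rank $r-1$. Combining multiplicativity with the two observations,
\[
\pi_r=\pi_r^2=\Phi(S_1)\Phi(S_2)=\Phi(S_1S_2).
\]
Since $\rank(S_1S_2)=r-1<r$, the inductive hypothesis gives $\Phi(S_1S_2)=0$, whence $\pi_r=0$; the base case $r=0$ is just $\Phi(0)=0$. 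Together with the reduction of the first paragraph this yields $\Phi(A)=0$ for every $A$ of rank $<n$.

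The step I expect to be the main obstacle is precisely the passage to ranks $r>n/2$. For small $r$ one can choose $S_1,S_2$ to be projections onto disjoint coordinate subspaces, so that $S_1S_2=0$ outright and $\pi_r=\pi_r^2=\Phi(0)=0$ in one stroke; but when $r>n/2$ no two rank-$r$ idempotents can multiply to zero, and this naive argument breaks down. The inductive formulation above circumvents the difficulty by only requiring the product to drop rank by one rather than to vanish, so that the already-established vanishing at lower ranks can be invoked. The other point needing care is insisting on $\SL$-conjugacy rather than mere $\GL$-conjugacy, since it is exactly the hypothesis $\SL(n,\mF)\subseteq\Phi^{-1}(I_k)$ that makes $\Phi$ constant on these classes.
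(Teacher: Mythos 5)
Your proof is correct, but it takes a genuinely different route from the paper's. The paper needs no induction on the rank: it writes $A$ of rank $r<n$ in two ways, $A=PIQ=SJT$ with $P,Q,S,T\in\SL(n,\mF)$, where $I=\diag(I_r,0)$ is the standard rank-$r$ idempotent and $J$ is a nilpotent Jordan matrix of rank $r$ (the determinant correction is absorbed into the zero rows and columns of the normal form, so two-sided $\SL$-equivalence is immediate). Triviality then gives $\Phi(A)=\Phi(I)=\Phi(J)$, and raising to a power $m>r$ finishes in one stroke: $\Phi(I)=\Phi(I^m)=\Phi(I)^m=\Phi(J)^m=\Phi(J^m)=\Phi(0)=0$. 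In other words, where you compare two idempotents of the same rank --- which forces you to establish $\SL$-conjugacy of all rank-$r$ idempotents via a determinant correction in the centralizer $\GL(r)\times\GL(n-r)$, and to run an induction precisely because two rank-$r$ projections cannot multiply to zero when $r>n/2$ --- the paper compares the idempotent with a \emph{nilpotent} matrix of the same rank, and nilpotency kills the large-rank obstruction outright, with no induction and no conjugacy lemma (only equivalence, not similarity, is needed). What your argument buys in exchange is a purely idempotent-theoretic proof that uses only conjugation-invariance of $\Phi$ on $\SL$-classes and the rank-drop identity for the overlapping projections $S_1,S_2$, which is structurally transparent and makes explicit exactly where the hypothesis $\SL(n,\mF)\subseteq\Phi^{-1}(I_k)$ enters; both proofs rest on the same standing assumptions $\Phi(0)=0$ and $\Phi(I_n)=I_k$.
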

\begin{proof}
Since $\rank(A)=r<n$, there exist $P,Q,S,T\in \SL(n,\mF)$ such that $A=PIQ=SJT$, where $I=\left(\begin{array}{cc}I_r&0\\0&0\end{array}\right)$ and $J$ is a Jordan canonical form with diagonal entries being $0$ and $\rank(J)=r$. Thus $\Phi(A)=\Phi(I)=\Phi(J)$. Since $I=I^m$ and $J^m=0$ for $m>r$, it is easy to see that $\Phi(A)=\Phi(I)=\Phi(I^m)=\Phi(J^m)=0$.
\end{proof}

Thanks to the above lemma, any trivial multiplicative map $\Phi$ is reduced to a multiplicative map $\overline{\Phi}:\mF\rightarrow M_k(\mF)$ with $\overline{\Phi}(1)=I_k$. The following proposition shows that many multiplicative maps are trivial and it is crucial for rest of the paper.

\begin{prop}\label{Ker1}
Let $\Phi:M_{n}(\mF)\rightarrow M_{k}(\mF)(k<n)$
be a multiplicative map.
Then $\Phi$ is a trivial multiplicative map.
\end{prop}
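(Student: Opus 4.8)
The plan is to restrict $\Phi$ to the unit group and play the classical normal-subgroup dichotomy against the rigidity of unipotent elements.

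First I would reduce to the normalized case $\Phi(0)=0,\ \Phi(I_n)=I_k$ using the reduction of Lemma~2.1: the associated $\overline\Phi$ lands in $M_l(\mF)$ with $l\le k<n$, and triviality of $\overline\Phi$ gives triviality of $\Phi$, so I may assume $\Phi$ itself is normalized with $k<n$. Then $\rho:=\Phi|_{\GL(n,\mF)}\colon \GL(n,\mF)\to\GL(k,\mF)$ is a group homomorphism (since $\Phi(A)\Phi(A^{-1})=I_k$), and $N:=\Phi^{-1}(I_k)\cap\GL(n,\mF)=\ker\rho$ is normal in $\GL(n,\mF)$. By the classification of normal subgroups quoted above, either $\SL(n,\mF)\subseteq N$ — which is exactly the assertion that $\Phi$ is trivial — or $N\subseteq Z(\GL(n,\mF))$, the scalar matrices. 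The entire content is to rule out this second, ``central'', alternative when $k<n$.

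The key step I would prove is that $\rho$ sends unipotent elements to unipotent elements. Fix a unipotent $u\in\SL(n,\mF)$. In characteristic $0$ each Jordan block $I+N$ of $u$ has the property that $(I+N)^m$ is again a single unipotent Jordan block of the same size for every integer $m\ge1$, so $u$ and $u^m$ have the same Jordan type and hence are conjugate over $\mF$. Applying $\rho$, the element $\rho(u)^m=\rho(u^m)$ is conjugate to $\rho(u)$ in $\GL(k,\mF)$, whence $\operatorname{tr}\rho(u)^m=\operatorname{tr}\rho(u)$ for all $m\ge1$. Since $\rho(u)$ is invertible its eigenvalues over $\overline{\mF}$ are nonzero; the relation $\rho(u)^m\sim\rho(u)$ shows that raising to the $m$-th power permutes their multiset, forcing each eigenvalue to be a root of unity, and the constancy of the power sums $\sum_i\mu_i^m$ together with the linear independence of the sequences $(\omega^m)_{m\ge1}$ attached to distinct roots of unity $\omega$ forces every eigenvalue to equal $1$. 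Thus $\rho(u)$ is unipotent.

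With this in hand the dimension hypothesis finishes the job. Let $U\subset\SL(n,\mF)$ be the group of upper unitriangular matrices; in the central case $U\cap Z(\GL(n,\mF))=\{I\}$, so $\rho|_U$ is injective and $\rho(U)\cong U$. Every element of $U$ is unipotent, so by the previous step $\rho(U)$ is a subgroup of $\GL(k,\mF)$ consisting entirely of unipotent matrices; by Kolchin's theorem it is simultaneously unitriangularizable and therefore nilpotent of class at most $k-1$. But $U$ has nilpotency class exactly $n-1$ (iterated commutators of the transvections $t_{i,i+1}(1)=I+e_{i,i+1}$ produce a nontrivial $t_{1n}(\pm1)\ne I$). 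Hence $n-1\le k-1$, contradicting $k<n$; the central alternative is impossible and $\Phi$ is trivial.

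The main obstacle is the middle step: an abstract homomorphism $\rho$ need not respect unipotence, so this must be extracted purely from multiplicativity. The delicate point is upgrading ``all eigenvalues are roots of unity'' to ``all eigenvalues equal $1$'' over an arbitrary field of characteristic $0$, which is what the trace-rigidity argument achieves; once $\rho$ is known to preserve unipotence, the normal-subgroup dichotomy, Kolchin's theorem, and the computation that $U$ has class $n-1$ are all standard. As a sanity check that $k<n$ is really being used, the idempotents $\Phi(e_{11}),\dots,\Phi(e_{nn})$ are pairwise orthogonal and mutually conjugate (via permutation matrices), so their equal ranks sum to at most $k<n$ and must all vanish; this already shows the hypothesis is felt, but by itself it does not settle the central alternative, which is exactly why the unipotence step is the crux.
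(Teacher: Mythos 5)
Your proof is correct, and while it shares the paper's overall skeleton (unitriangular groups, nilpotency class comparison $n-1$ versus $k-1$, and the normal-subgroup dichotomy for $\GL(n,\mF)$), it differs at exactly the point where the paper is weakest, in a way worth spelling out. The paper passes from $(\Phi(N))^{n}=\Phi(N^{n})=\{I_{k}\}$ — i.e.\ mere nilpotency of the image group — directly to ``up to a conjugation, $\Phi(N)\subseteq \overline{N}$.'' As literally stated this implication is false: nilpotent subgroups of $\GL(k,\mF)$ need not be unipotent (any subgroup of the diagonal torus, e.g.\ $\{\pm I_k\}$, is abelian hence nilpotent but not conjugate into $\overline{N}$), so the bound of $k$ on the class of $\Phi(N)$ does not follow from nilpotency alone. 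Your middle step — $u\sim u^m$ for unipotent $u$ in characteristic $0$, hence $\rho(u)\sim\rho(u)^m$, hence eigenvalues are roots of unity, upgraded to all equal to $1$ by the rigidity of the power sums (one can shortcut your linear-independence argument by taking $m$ to be the least common multiple of the orders, so that $\rho(u)^m$ is unipotent and conjugate to $\rho(u)$) — followed by Kolchin's theorem, is precisely the missing justification that the image of the unitriangular group is conjugate into $\overline{N}$. Your route also silently repairs a second blemish: the paper concludes $N^{k}\subseteq\Phi^{-1}(I_{k})$, which is vacuous when $k=n-1$ since then $N^{k}=\{I_n\}$ ($N$ has class exactly $n-1$ in the paper's indexing $N^0=N$); your sharp comparison, injectivity of $\rho$ on $U$ in the central alternative together with class $(\rho(U))\le k-1<n-1$, has no such off-by-one. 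The two proofs then invoke the same dichotomy in mirror-image form: the paper exhibits a non-central element of the kernel, while you assume the kernel is central and derive a contradiction — logically equivalent. In short, your proposal is a complete proof, and it can fairly be read as the rigorous version of the paper's sketch; the only cost is the extra machinery (Kolchin's theorem and the eigenvalue argument), and the only caution is that your unipotence-preservation step genuinely uses characteristic $0$ (in characteristic $p$, $u^p$ is not conjugate to $u$), which is consistent with the paper restricting to that case.
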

\begin{proof}
Let $N$ (resp. $\overline{N}$) denote the set of $n\times n$
(resp. $k\times k$) upper triangular matrices
with all the diagonal entries being $1$. Consider the lower central sequence: $N^{0}=N$, $N^{1}=[N,N]=\{xyx^{-1}y^{-1}|x,y\in N\},\ldots,N^{i}=[N,N^{i-1}]$.
It is easy to calculate that $N^{n}=\{I_{n}\}$ and
$(\overline{N})^{k}=\{I_{k}\}$.
Furthermore, since $(\Phi(N))^{n}=\Phi(N^{n})=\Phi(I_{n})=I_{k}$,
it follows that, up to a conjugation, $\Phi(N)\subseteq \overline{N}$.
Then we have $\Phi(N)^{k}\subseteq (\overline{N})^{k}=\{I_{k}\}$,
which implies that
$N^{k}\subseteq \Phi^{-1}(I_{k})$.
Therefore $\Phi^{-1}(I_k)\supseteq \SL(n,\mF)$ since it is not contained in the center of $\GL(n,\mF)$.
\end{proof}

Thus, any of multiplicative map $\Phi:M_{n}(\mF)\rightarrow M_{k}(\mF)(k<n)$
is the form of $\Phi(A)=\overline{\Phi}(\Det(A))$ for some multiplicative map $\overline{\Phi}:\mF\rightarrow M_{k}(\mF)$.

\section{Degenerate Multiplicative Maps}\label{GL}

In this section, we just need to consider non-trivial multiplicative maps and
hence we have $\Phi(0)=0$ and $\Phi(I_{n})=I_{n}$.
Following Jodeit and Lam we call a nontrivial multiplicative map $\Phi:M_{n}(\mF)\rightarrow M_{n}(\mF)$ \emph{degenerate}, if $\Phi(A)=0$ for any degenerate $A$, and is called \emph{non-degenerate} otherwise.
It is trivial that degenerate multiplicative maps are in 1-1 correspondence with multiplicative maps
$\Phi:\GL(n,\mF)\rightarrow \GL(n,\mF)$.

\begin{thm}\label{GL(n)}
Let $\Phi:\GL(n,\mF)\rightarrow \GL(n,\mF)$ be a non-trivial
multiplicative map. Then $\Phi(A)=\lambda(\det(\Phi_1(A)))\Phi_2(A)$, where $\lambda:\mF^{\ast}\rightarrow \mF^{\ast}$ is a multiplicative map and $\Phi_1,\Phi_2$ are compositions of the following multiplicative maps.

\begin{enumerate}
  \item [(a).] $\Phi_R(A)=R^{-1}AR$ for some
  $n\times n$ invertible matrix $R$;
  \item [(b).] $\Phi^*(A)=(A^{\ast})^{t}$, where $A^\ast$ is the cofactor matrix of $A$ and $A^t$ is the transpose of $A$;
  \item [(c).] $\Phi_\varphi(A)=(\varphi(a_{ij}))$
for any $A=(a_{ij})\in \GL(n)$,
where $\varphi:\mF\rightarrow \mF$ is a ring homomorphism.
\end{enumerate}
\end{thm}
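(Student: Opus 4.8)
The plan is to analyze $\Phi$ on a generating set of $\GL(n,\mF)$ and to exploit the defining relations among those generators. First I would split off the determinant: because $\mF^{\ast}$ is abelian, the homomorphism $A\mapsto\det(\Phi(A))$ must factor through the abelianization of $\GL(n,\mF)$, which for $n\ge 2$ over a field of characteristic $0$ is $\mF^{\ast}$ via $\det$ (since $[\GL(n,\mF),\GL(n,\mF)]=\SL(n,\mF)$). Hence $\det(\Phi(A))=\mu(\det A)$ for a multiplicative $\mu:\mF^{\ast}\rightarrow\mF^{\ast}$, and this character is the eventual source of the scalar factor $\lambda(\det(\Phi_1(A)))$. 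Next I would restrict $\Phi$ to $\SL(n,\mF)$: its kernel is a normal subgroup of $\SL(n,\mF)$, hence either central (contained in the group of $n$-th roots of unity times $I$) or all of $\SL(n,\mF)$, and since $\Phi$ is non-trivial the latter is excluded. So $\Phi|_{\SL(n,\mF)}$ has central kernel, and the whole problem reduces to identifying this restriction.

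For the core step I would use that $\SL(n,\mF)$ is generated by the elementary transvections $t_{ij}(s)=I+sE_{ij}$ with $i\neq j$. For fixed $i,j$ the assignment $s\mapsto\Phi(t_{ij}(s))$ is a homomorphism from $(\mF,+)$ into $\GL(n,\mF)$, and the Steinberg commutator relations $[t_{ij}(s),t_{jk}(u)]=t_{ik}(su)$ (for $n\ge 3$ and distinct $i,j,k$), together with $[t_{ij}(s),t_{kl}(u)]=I$ when the indices do not chain, force the images of the root subgroups into a rigid configuration. Tracking these relations lets me reconstruct a ring homomorphism $\varphi:\mF\rightarrow\mF$ from the multiplication $su\mapsto\varphi(s)\varphi(u)$ recorded by the commutators, and, after a single conjugation by some invertible $R$ (absorbing the choice of how the root subgroups sit inside $\GL(n,\mF)$), to show that $\Phi(t_{ij}(s))=R^{-1}(I+\varphi(s)E_{ij})R$ for all $i,j,s$. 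The residual ambiguity is exactly the symmetry of the $A_{n-1}$ diagram: the root subgroups may be matched directly or with the ends reversed, producing the two cases $\Phi_2(A)=R^{-1}(\varphi(a_{ij}))R$ and $\Phi_2(A)=R^{-1}(\varphi(a_{ij})^{\ast})^{t}R$, the latter being the composition with the contragredient $\Phi^{\ast}(A)=(A^{\ast})^{t}=\det(A)A^{-t}$. Since the transvections generate $\SL(n,\mF)$, this pins down $\Phi_2=\Phi|_{\SL(n,\mF)}$ completely, and on $\SL(n,\mF)$ the determinant twist is trivial.

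Finally I would extend from $\SL(n,\mF)$ to $\GL(n,\mF)$. Writing a general $A$ as $A=A_{0}\,d$ with $A_{0}\in\SL(n,\mF)$ and $d=\diag(\det A,1,\dots,1)$, it suffices to compute $\Phi(d)$. The identity $\Phi(d)\Phi(s)\Phi(d)^{-1}=\Phi(dsd^{-1})$ shows that $\Phi(d)\Phi_2(d)^{-1}$ centralizes the image $\Phi_2(\SL(n,\mF))$, which is a conjugate of $\SL(n,\varphi(\mF))$ acting irreducibly on $\mF^{n}$ (a subspace invariant under all $I+sE_{ij}$ must be $0$ or everything); by Schur's lemma this centralizer consists of scalars, so $\Phi(d)=\chi(\det A)\,\Phi_2(d)$ for some $\chi(\det A)\in\mF^{\ast}$, and multiplicativity of $\Phi$ forces $\chi$ to be multiplicative. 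Absorbing the central scalar then gives $\Phi(A)=\chi(\det A)\,\Phi_2(A)$, which is the asserted form once one notes that every multiplicative character $\GL(n,\mF)\rightarrow\mF^{\ast}$, in particular $\lambda(\det(\Phi_1(\cdot)))$, factors through $\det$. I expect the main obstacle to be the core step: rigorously recovering $\varphi$ and excluding exotic configurations of the transvection images purely from the commutator relations, and in particular treating $n=2$ separately, where the three-index Steinberg relation is unavailable and the contragredient is inner, so that the two cases collapse.
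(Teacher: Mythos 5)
Your architecture coincides with the paper's at every joint: both proofs work on the transvections $P_{ij}(k)=I+kE_{ij}$, extract additivity of $\varphi$ from each one-parameter root subgroup, get multiplicativity from the commutator relation $P_{13}(kl)=P_{12}(k)^{-1}P_{23}(l)^{-1}P_{12}(k)P_{23}(l)$, obtain the standard/contragredient dichotomy, and extend from $\SL(n,\mF)$ to $\GL(n,\mF)$ by splitting off $D_1(\det A)$; your Schur-lemma computation of $\Phi(D_1(k))$ up to a scalar character is a clean equivalent of the paper's $D_1(k)D_2(1/k)\in\SL(n,\mF)$ trick. But the step you yourself flag as the main obstacle is a genuine gap, and it is exactly where all of the paper's work lives: nothing in your outline shows that, after a \emph{single} conjugation, each $\Phi(t_{ij}(s))$ equals $I+\varphi(s)E_{ij}$ (or the dual configuration). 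The Steinberg relations alone do not force this rigidity; they are equally satisfied by configurations in which root subgroups map to higher-rank unipotents (the abstract analogue of a direct sum of two twisted standard representations), and what excludes these is not a relation but a dimension count --- the target is only $n\times n$. Asserting that the relations ``force a rigid configuration'' is therefore assuming the theorem's core content.

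The paper supplies precisely the missing mechanism in its Claims (1)--(2): it first normalizes the involutions $\Phi(D_i(-1))$, invoking Proposition~\ref{Ker1} (any multiplicative map $M_n(\mF)\rightarrow M_k(\mF)$ with $k<n$ is trivial) to rule out a $-1$-eigenspace of intermediate dimension $1<m<n-1$, then normalizes the $\Phi(S_{ij})$ by a diagonal conjugation; only after these normalizations do centralizer relations ($P_{12}(k)$ commutes with $D_i(-1)$ for $i>2$, $D_1(-1)P_{12}(k)D_1(-1)=P_{12}(k)^{-1}$, $S_{12}AS_{12}=A^t$) pin $\Phi(P_{12}(k))$ to a single off-diagonal entry, with the condition $bc=0$ producing your two cases. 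To complete your plan you would need this chain (or an equivalent rigidity argument in the Borel--Tits style), and that is the bulk of the proof rather than a detail to be checked. Two smaller points: your opening abelianization observation is never actually used in your final assembly, so it is harmless but redundant; and your remark that $n=2$ requires separate treatment (no three-index commutator, det-twisted contragredient inner) is correct --- indeed it identifies a case the paper's own proof of Claim (3) silently glosses over.
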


\begin{proof}
The proof of the theorem consists of several steps. For $j=1,\ldots,n$, let $D_{j}(k)=\diag(1,\ldots,1,k,1,\ldots,1)$, where the $j$-th
entry is $k$.

Claim (1): Up to a composition with $\Phi_R$ and a multiplication by $\det(A)$ if necessary, we have $\Phi(D_{i}(-1))=D_{i}(-1)$.

From the fact that $\Phi(D_{1}(-1))^{2}=\Phi(D_{1}(-1)^{2})=\Phi(I_{n})=I_{n}$,
composing with some $R$ when necessary, we have $\Phi(D_{1}(-1))=\left(
                                            \begin{array}{cc}
                                              -I_{m} & 0 \\
                                              0 & I_{n-m} \\
                                            \end{array}
                                          \right)
$. Since $D_{1}(-1)$ commutes with $A=\left(
                                  \begin{array}{cc}
                                    1 & 0 \\
                                    0 & A_{1}\\
                                  \end{array}
                                \right)
$, where
$A_{1}\in \GL(n-1,\mF)$, it follows that $\Phi(A)$ must
be a quasi diagonal matrix. We may therefore assume that
\[\Phi(A)=\left(
            \begin{array}{cc}
              B_{1} & 0 \\
              0 & B_{2} \\
            \end{array}
          \right)
,\textrm{where} \ B_{1}\in \GL(m,\mF), \ B_{2}\in \GL(n-m,\mF).\]
It shows that the maps $\Phi_1:\GL(n-1,\mF)\rightarrow \GL(m,\mF)$ defined by $\Phi_1(A_1)=B_1$ and $\Phi_2:\GL(n-1,\mF)\rightarrow \GL(n-m,\mF)$ defined by $\Phi_2(A_1)=B_2$ are both multiplicative maps. Then we conclude that $m=1$ or $m=n-1$ by Proposition~\ref{Ker1}, otherwise both $\Phi_1$ and $\Phi_2$ are trivials, so is $\Phi$. If $m=n-1$, we may assume that $m=1$.
Since $\Phi(D_{i}(-1))$ have the same eigenvalues as $\Phi(D_{1}(-1))$
and $\Phi(D_{i}(-1))$ commute with $\Phi(D_{j}(-1))$ for $i,j=1,\ldots,n$,
we have $\Phi(D_{i}(-1))=D_{i}(-1)$ up to a conjugation.

Claim (2): Up to a composition with $\Phi_T$, we have $\Phi(S_{ij})=S_{ij}$,
where $T$ is a diagonal matrix and $S_{ij}$ is the identity matrix with rows
$i$ and $j$ switched.

We consider the image of $S_{i,i+1}$ under $\Phi$. Note that $S_{i,i+1}$ (resp. $\Phi(S_{i,i+1})$) commutes with $D_k(-1)$ (resp. $\Phi(D_k(-1))$), for $k\neq i,i+1$, and $D_{i}(-1)S_{i,i+1}D_{i+1}(-1)=S_{i,i+1}$. Then we see that $\Phi(S_{i,i+1})=\diag(a_1,\ldots,a_{i-1},B_i,a_{i+2},\ldots,a_n)$, for some $a_i\in\mF^{\ast}$ and $B_i=\left(\begin{array}{cc}
                  0 & b_i \\
                  \frac{1}{b_i} & 0\end{array}\right)$ for some nonzero $b_i\in\mF$.
Furthermore, $S_{i,i+1}$ is similar to $D_1(-1)$, so is $\Phi(S_{i,i+1})$. Thus we have $a_i=1$. Take $T=\diag(1,b_1,b_1b_2,\ldots,b_1b_2\ldots b_{n-1})$, one have $T\Phi(S_{i,i+1})T^{-1}=S_{i,i+1}$. Replacing $\Phi$ by $T\Phi T^{-1}$, one has $\Phi(S_{ij})=S_{ij}$.

Claim (3): Up to a composition with some $\Phi_R$ and with $\Phi^*$ when necessary,
we have $\Phi(A)=(\varphi(a_{ij}))$ for any $A=(a_{ij})\in \SL(n,\mF)$, where $\varphi$
is a nontrivial ring homomorphism of $\mF$.

For invertible $A$, it is easy to see that $A$ is unipotent if and only if $A$ is conjugate to $A^n$ for any $n\in\N$. Thus $\Phi(A)$ is also unipotent.

For $j=1,\ldots,n$, let $P_{j,j+1}(k)=\diag(1,\ldots,1,\left(
                             \begin{array}{cc}
                               1 & k \\
                               0 & 1 \\
                             \end{array}
                           \right)
,1,\ldots,1)$, where the $j,j+1$ entry is $k$. Let $A=P_{12}(k)$. Then $A$ commutes with $D_i(-1)$, $i>2$. Thus one can easily get $\Phi(A)=\diag(B,1,\ldots,1)$, where $B=\left(\begin{array}{cc}a&b\\c&d\end{array}\right)$ is a $2\times 2$ unipotent matrix. By $D_1(-1)AD_1(-1)=A^{-1}$, one can get that $a=d=1$ and $bc=0$. Replacing $\Phi(A)$ by $\Phi((A^{-1})^t)=\Phi\circ(\det(A)^{-1}\Phi_2(A))$ when necessary, one can assume that $c=0$. Note that the map $A\rightarrow (A^{-1})^t$ preserves $D_i(-1)$ and $S_{ij}$. Denote $b$ by $\varphi(k)$. Then one can easily check that $$\varphi(k+l)=\varphi(k)+\varphi(l).$$ Similarly, by $S_{12}AS_{12}=A^t$, one has that $\Phi(A^t)=\Phi(A)^t$.
Since $\Phi(S_{ij})=S_{ij}$, we have that $\Phi(P_{ij}(k))=P_{ij}(\varphi(k))$.

Notice that $P_{13}(kl)=P_{12}(k)^{-1}P_{23}(l)^{-1}P_{12}(k)P_{23}(l)$, we have $$P_{13}(\varphi(kl))=P_{12}(\varphi(k))^{-1}P_{23}(\varphi(l))^{-1}P_{12}(\varphi(k))P_{23}(\varphi(l))=P_{13}(\varphi(k)\varphi(l)).$$ Hence $$\varphi(kl)=\varphi(k)\varphi(l),$$ i.e., $\varphi$ is a nontrivial ring homomorphism of $\mF$. Consequently $\varphi(1)=1$. Since $\SL(n,\mF)$ are generated by $P_{ij}(k)$, one has that $\Phi(A)=(\varphi(a_{ij}))$, for any $A=(a_{ij})\in \SL(n,\mF)$.

Now we are ready to complete the proof. For any $A\in \GL(n,\mF)$, $A=D_1(k)A_0$, where $k=\det(A)$ and $A_0\in \SL(n,\mF)$. It is easy to see that $\Phi(D_{1}(k))=s\diag(t,1,\ldots,1)$ for some $s,t\in\mF^*$. Similarly, $\Phi(D_2(k))=s\diag(1,t,1\ldots,1)$. Noticing that $D_1(k)D_2(1/k)\in \SL(n,\mF)$, then we have
$$\Phi(D_1(k)D_2(1/k))=\Phi(\diag(k,1/k,1,\ldots,1)).$$
Therefore, $\diag(t,1/t,1,\ldots)=\diag(\varphi(k),1/\varphi(k),1,\ldots,1)$, i.e., $t=\varphi(k)$. Denoting $s=\lambda(\det(A))$, where $\lambda$ is a multiplicative map on $\mF^*$, we have $\Phi(A)=\lambda(\det(A))(\varphi(a_{ij}))$, for any $A=(a_{ij})\in \GL(n,\mF)$.
\end{proof}

\section{Non-degenerate Multiplicative Maps}\label{M(n)}
In this section, we study non-degenerate multiplicative maps $\Phi$
on $M_{n}(\mF)$, i.e., there exists some degenerate matrix $A$ such that $\Phi(A)\neq 0$. The main result is the following theorem, which is well-known in some sense (see \cite{JL1968}, for example). One can easily see that it is a consequence of the proof of Theorem~\ref{GL(n)}.

\begin{thm}
Let $\Phi:M_{n}(\mF)\rightarrow M_{n}(\mF)$ be a non-degenerate multiplicative map. Then, for
 any $A=(a_{ij})\in M_{n}(\mF)$,
$$\Phi(A)=R^{-1}(\varphi(a_{ij}))R\ \ \ \mbox{   or   }\ \ \ \Phi(A)=R^{-1}(\varphi(a_{ij})^{\ast})^{t}R,$$
for some ring homomorphism $\varphi:\mF\rightarrow \mF$
and some invertible matrix $R$.
\end{thm}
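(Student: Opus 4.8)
The plan is to reduce the non-degenerate case to the invertible case already settled in Theorem~\ref{GL(n)}. First I would observe that a non-degenerate multiplicative map $\Phi$ on $M_n(\mF)$ restricts to a multiplicative map on $\GL(n,\mF)$, and this restriction is non-trivial: if it were trivial, then by the Lemma on trivial maps $\Phi$ would kill every matrix of rank $<n$, forcing $\Phi$ to be degenerate, contrary to hypothesis. Hence Theorem~\ref{GL(n)} applies to $\Phi|_{\GL(n,\mF)}$. The extra constraint we gain over the degenerate case is that $\Phi$ maps \emph{into} $M_n(\mF)$ and is non-degenerate, so some singular matrix has nonzero image; the goal is to show this pins down the scalar factor $\lambda(\det(\Phi_1(A)))$ to be trivial and forces $\Phi_1=\Phi_2$, collapsing the general form of Theorem~\ref{GL(n)} to precisely a conjugated entrywise homomorphism, possibly composed with the adjugate-transpose.

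The key steps, in order, are as follows. First, from Theorem~\ref{GL(n)} write $\Phi(A)=\lambda(\det(\Phi_1(A)))\,\Phi_2(A)$ for $A\in\GL(n,\mF)$, where $\Phi_1,\Phi_2$ are built from the maps (a), (b), (c); since the composition of $\Phi_R$, $\Phi^*$, and $\Phi_\varphi$ is again of the form $R^{-1}(\varphi(a_{ij}))R$ or $R^{-1}(\varphi(a_{ij})^\ast)^t R$ up to the scalar, I would reorganize the expression so that $\Phi(A)=\lambda(\det A)^c\,R^{-1}\psi(A)R$ with $\psi$ being either the entrywise map $A\mapsto(\varphi(a_{ij}))$ or its adjugate-transpose composite, and $c$ an integer exponent coming from how $\det$ interacts with these generators. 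Second, I would use the non-degeneracy: pick a rank-one idempotent $E$ (so $E$ is singular), and study $\Phi(E)$. Because $E^2=E$, its image $\Phi(E)$ is idempotent, and multiplicativity relates $\Phi(E)$ to the invertible picture via products $UEV$ with $U,V\in\GL(n,\mF)$. Extending $\Phi$ continuously (algebraically) from $\GL(n,\mF)$ to singular matrices as limits of the form $\psi$ forces the scalar correction $\lambda(\det A)^c$ to be compatible with $\Phi$ being everywhere defined and multiplicative on \emph{all} of $M_n(\mF)$; in particular $\Phi(D_1(k)\cdot E)=\Phi(D_1(k))\Phi(E)$ must remain finite and nonzero as we vary $k$, which kills any nontrivial $\lambda$ and the exponent $c$.

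Concretely, for the final step I would argue that $\Phi$ must send the standard rank-one idempotents $E_{ii}$ to rank-one idempotents and, using $E_{ii}E_{jj}=0$ and $\sum E_{ii}=I_n$, that the images are mutually orthogonal rank-one idempotents summing to $I_n$; conjugating by a suitable $R$ (absorbed into the existing $R$), one normalizes $\Phi(E_{ii})=E_{ii}$. Then for a generic singular $A$, writing $A$ as a product involving the $E_{ii}$ and invertible factors and comparing with the $\GL$-formula forces $\lambda\equiv 1$ and the scalar exponent to vanish, leaving exactly $\Phi(A)=R^{-1}(\varphi(a_{ij}))R$ or $R^{-1}(\varphi(a_{ij})^\ast)^t R$ on the invertible part, which then extends by the entrywise formula (continuity of the homomorphism $\varphi$ under the ring operations recovered on products) to all of $M_n(\mF)$.

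The main obstacle I anticipate is the step that eliminates the scalar multiplier $\lambda(\det(\cdot))$: on $\GL(n,\mF)$ alone this factor is genuinely present and cannot be removed, so the whole force of the argument rests on showing that its presence is incompatible with $\Phi$ being multiplicative across the boundary between invertible and singular matrices. Making the limiting/product argument rigorous — that is, controlling $\Phi(D_j(k)E_{ii})$ or an analogous degeneration as $k$ ranges over $\mF^\ast$ and showing the only globally consistent choice is $\lambda\equiv 1$ with trivial exponent — is where the real work lies; everything else is a bookkeeping reduction to Theorem~\ref{GL(n)} together with the standard orthogonal-idempotent normalization.
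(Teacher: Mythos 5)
Your reduction to Theorem~\ref{GL(n)} starts soundly (non-degeneracy does imply the restriction to $\GL(n,\mF)$ is non-trivial), but the two steps that carry all the weight fail as stated. First, you argue that the images $\Phi(E_{ii})$ are rank-one idempotents ``summing to $I_n$'' using $\sum_i E_{ii}=I_n$; a multiplicative map is not additive, so this relation transports no information whatsoever through $\Phi$. Second, and more fundamentally, your starting assumption that a rank-one idempotent has a \emph{nonzero} idempotent image is exactly what non-degeneracy does not give you: it only provides \emph{some} singular matrix with nonzero image. In the adjugate branch $\Phi(A)=R^{-1}(\varphi(a_{ij})^{\ast})^{t}R$ with $n\ge 3$, every matrix of rank $\le n-2$ is genuinely sent to $0$, so your normalization $\Phi(E_{ii})=E_{ii}$ is impossible there, and your proposal has no route to determining $\Phi$ on singular matrices in that case. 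Finally, the ``limiting/continuity'' device you lean on to kill $\lambda$ is unavailable: $\mF$ is an abstract field of characteristic $0$ with no topology, so nothing can be ``extended continuously'' from $\GL(n,\mF)$ to its boundary.

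The paper's proof avoids all of this by not routing through Theorem~\ref{GL(n)} at the start; instead it splits on which rank first survives. If rank-one matrices survive, the matrices $F_{ij}=\Phi(E_{ij})$ satisfy $F_{ij}F_{kl}=\delta_{jk}F_{il}$ and are conjugate to the standard matrix units; after normalizing $\Phi(E_{ij})=E_{ij}$, the purely algebraic sandwich identity $E_{ii}\Phi(B)E_{jj}=\Phi(E_{ii}BE_{jj})=\varphi(b_{ij})E_{ij}$ reads off \emph{every} entry of $\Phi(B)$ for \emph{all} $B$, invertible or not, in one stroke --- this is the rigorous version of the step you flagged as the main obstacle: it simultaneously forces $\lambda\equiv 1$ and settles the singular matrices, with no limits needed. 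If rank-one matrices die, let $r$ be the least rank that survives; the images of the $C^r_n$ diagonal rank-$r$ idempotents must be pairwise distinct (if $\Phi(A)=\Phi(B)$ with $A\neq B$ then $\Phi(A)=\Phi(A)\Phi(B)=\Phi(AB)=0$ since $\rank(AB)<r$) and pairwise orthogonal, so there are at most $n$ of them; $C^r_n>n$ for $1<r<n-1$ forces $r=n-1$, corank-one matrices then have rank-one images, one normalizes $\Phi(I_n-E_{jj})=E_{jj}$, and the method of Theorem~\ref{GL(n)} yields the adjugate form. If you want to salvage your outline, replace the additivity and continuity appeals with this sandwich identity, and insert the counting argument to establish the dichotomy between the two branches before any normalization of idempotents.
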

\begin{proof}
If $\Phi(A)\neq 0$ for some (hence all) matrix of rank 1, letting $F_{ij}=\Phi(E_{ij})$, one has $F_{ij}F_{kl}=\delta_{jk}F_{il}$. It is a standard result that there exists a matrix $R$ such that $RF_{ij}R^{-1}=E_{ij}$. Replacing $\Phi$ by $R\Phi R^{-1}$, we may assume that $\Phi(E_{ij})=E_{ij}$.
For any $i,j$ and for any $b\in \mF$, it is easy to see that $\Phi(bE_{ij})=b'E_{ij}$, for some $b'\in\mF$. Then we have a map $\varphi:\mF\rightarrow\mF$ such that $\varphi(b)=b'$. The map $\varphi$ is independent of $i,j$ since $E_{ki}E_{ij}E_{jl}=E_{kl}$.
For any $B=(b_{ij})\in M_{n}(\mF)$, since $E_{ii}\Phi(B)E_{jj}=\Phi(E_{ii}BE_{jj})=\Phi(b_{ij}E_{ij})=\varphi(b_{ij})E_{ij}$, we have $\Phi(B)=(\varphi(b_{ij}))$ such that $\varphi(b_{ij})=b'_{ij}, (i,j=1,2,\ldots,n)$. It's an easy exercise to show that
$\varphi(1)=1$, $\varphi(kl)=\varphi(k)\varphi(l)$ and $\varphi(k+l)=\varphi(k)+\varphi(l)$
for any $k,l\in\mF$.
Thus $\varphi$ is a nontrivial ring homomorphism of $\mF$. For any $B=(b_{ij})\in M(n,\mF)$, we have $E_{ii}\Phi(B)E_{jj}=\Phi(E_{ii}BE_{jj})=\Phi(b_{ij}E_{ij})=\varphi(b_{ij})E_{ij}$. Therefore $\Phi(B)=(\varphi(b_{ij}))$.

Now assume that $\Phi(E_{ij})=0$. It is easy to see that if $\Phi(A)=0$ then $\Phi(B)=0$ provided $\rank (A)=\rank (B)$. Assume that $r$ is the least integer such that $\Phi(A)\neq 0$ if $\rank(A)=r$. Let $E_r$ be the set of diagonal idempotent matrices. Without loss of generality, we may assume that, for any $A\in E_r$,  $\Phi(A)$ is also a diagonal idempotent matrix. For any $A,B\in E_r$, $A\neq B$, then $\rank(AB)<r$ and $\Phi(AB)=\Phi(A)\Phi(B)=0$. Therefore the image $\Phi(E_r)$ has at most $n$ elements. If $r<n-1$, then $E_r$ has $C^r_n>n$ elements, and hence there exist $A,B\in E_r$ such that $\Phi(A)=\Phi(B)$, which is absurd since $\Phi(AB)=0$. Thus one can easily see that $r=n-1$ and for any $A$ with $\rank(A)=n-1$, $\rank(\Phi(A))=1$. Set $F_j=I_n-E_{jj}$. Without loss of generality, we may assume that $\Phi(F_j)=E_{jj}$. Then with the same method as in the proof of Theorem~\ref{GL(n)}, we can easily get the result.
\end{proof}

\begin{rem}
In the cases of $\mF=\Q$ or $\R$, it is easy to see that the only ring homomorphism is the identity map. For $\mF=\Q$, it is trivial. For $\mF=\R$, take any $x,y \in \R$, assume that $x-y>0$, then there exists
an element $z\in \R$ such that $z^{2}=x-y$. From
$\varphi(x-y)=\varphi(x)-\varphi(y)=\varphi(z)^{2}$,
we have $\varphi(x)>\varphi(y)$.
It follows that $\varphi(x)=x$
on $\R$.

For complex number field $\C$, there exists ring homomorphisms which are not surjective.
\end{rem}

{\bf Acknowledgements}: This work was supported by the National
Natural Science Foundation of China (Grant No. 11571182) and National Science Foundation for Young Scientists of China (Grant No. 11301287 and Grant No. 11401425).

\end{document}